\documentclass{article}
\usepackage[latin1]{inputenc}
\textwidth 15cm \oddsidemargin 0.75cm \evensidemargin 0.75cm
\addtolength{\textheight}{2cm} \addtolength{\topmargin}{-1cm}

\usepackage{amsfonts}
\usepackage{amsmath, latexsym}
\usepackage{amssymb,verbatim}
\usepackage{theorem}
\usepackage{color}
\usepackage{euscript}

\newtheorem{thm}     {Theorem}
\newtheorem{lem} [thm] {Lemma}
\newtheorem{pro}  [thm]   {Proposition}

\theorembodyfont{\rmfamily}
\newtheorem{rem}[thm] {Remark}

\newcommand{\ba}{\begin{align*}}
\newcommand{\ea}{\end{align*}}

\newcommand{\R}{\mathbb{R}}
\newcommand{\Ss}{\mathbb{S}}

\newcommand{\eps}{\varepsilon}

\newcommand{\ds}{\displaystyle}

\newcommand{\qed}{\hfill$\square$}

\newcommand{\be}{\begin{equation}}

\newcommand{\ee}{\end{equation}}

\newcommand{\nd}{\noindent}

\newcommand{\NN}{\mathbb{N}}

\newcommand{\f}{\varphi}

\title{Uniqueness result for a weighted pendulum equation modeling domain walls in notched ferromagnetic nanowires
}

\author{ {\Large Radu Ignat}
\thanks{Institut de Math\'ematiques de Toulouse \& Institut Universitaire de France, UMR 5219, Universit\'e de Toulouse, CNRS, UPS
IMT, F-31062 Toulouse Cedex 9, France. Email: Radu.Ignat@math.univ-toulouse.fr}}

\begin{document}

\maketitle

\begin{abstract}
We prove an existence and uniqueness result for solutions $\f$ to a weighted pendulum equation in $\R$ where the weight is non-smooth and coercive. We also establish (in)stability results for $\f$ according to the
monotonicity of the weight. These results are applied in a reduced model for thin ferromagnetic nanowires with notches to obtain existence, uniqueness and stability of domain walls connecting two opposite directions of the magnetization.

\smallskip

\noindent {\it Keywords:} pendulum equation, ferromagnetism, domain walls, uniqueness, stability.
\end{abstract}

\section{Introduction}
We consider a weight $a:\R\to \R$ that is a bounded positive measurable function (not necessarily continuous) satisfying 
\be
\label{cond}
A_0\geq a(x)\geq a_0>0 \quad \textrm{ for a.e. } x\in \R.
\ee
Motivated by a reduced model for notched ferromagnetic thin nanowires (see \cite{CS}) where $a$ represents the area of transversal sections in the nanowire (the area $a$ may have jumps in that model), we associate to the magnetization  $m=(m_1, m_2, m_3):\R\to \Ss^2$ the following energy functional
$$F(m)=\int_{\R} \bigg(a(x)|\partial_x m|^2+a(x)(m_2^2+m_3^2)\bigg)\, dx.$$
We are interested in the analysis of domain walls that are transition layers connecting the opposite directions $\pm e_1$, where $e_1=(1,0,0)$. Up to a rotation and a translation (eventually yielding a translated weight), we fix the center of the domain wall at the origin by imposing $m(0)=e_2=(0,1,0)$. 
Our first theorem is the following uniqueness result for optimal domain walls:

\begin{thm}
\label{thm1}
There exists a unique minimizer $m:\R\to \Ss^2$ of $F$ under the constraints 
$$m(0)=e_2\quad \textrm{ and }\quad m(\pm\infty)=\pm e_1.$$ This minimizer has the form $m=(\sin \f, \cos \f, 0)$ where $\f:\R\to \R$ is an increasing Lipschitz function with 
\be
\label{constr}
\f(0)=0  \quad \textrm{and}\quad \f(\pm \infty)=\pm \frac\pi2
\ee
and $\f$ solves the weighted pendulum equation
\be
\label{EL}
\partial_x(a(x)\partial_x \f)+a(x)\sin \f \cos \f=0\quad \textrm{in}\quad \R\setminus\{0\}. 
\ee
If in addition, $a$ is even in $\R$ and non-decreasing in $\R_+=(0, +\infty)$, then $\f$ is odd in $\R$, \eqref{EL} holds in the entire $\R$ and $m$ is a stable critical point of $F$, i.e., for every
$v\in H^1(\R, \R^3)$ with $v\cdot m=0$ in $\R$,
$$
T(v)=\frac12\frac{d^2}{dt^2}\bigg|_{t=0} F\bigg(\frac{m+tv}{|m+tv|}\bigg)=\int_{\R}\bigg( a(x)|\partial_x v|^2+a(x)(v_2^2+v_3^2)-\lambda(x)|v|^2\bigg)\, dx\geq 0,
$$
where $\lambda(x)=a(x)|\partial_x m|^2+a(x)m_2^2$ is the Lagrange multiplier for the constraint $|m|=1$.
\end{thm}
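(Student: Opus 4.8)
The plan is to run the direct method, reduce to a scalar problem by symmetry, and then treat uniqueness and stability separately; I expect uniqueness to be the main obstacle. For \emph{existence}, I would first note $\inf F<\infty$ using the explicit profile $m_0=(\sin\f_0,\cos\f_0,0)$ with $\f_0(x)=2\arctan(e^x)-\tfrac\pi2$, whose energy is finite since $a$ is bounded. For a minimizing sequence $m_n$, coercivity \eqref{cond} gives $a_0\int_\R|\partial_x m_n|^2\le F(m_n)\le C$, so $\partial_x m_n$ is bounded in $L^2(\R)$ and, after extraction, $m_n\rightharpoonup m$ in $H^1_{loc}$ and locally uniformly (1D embedding). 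Local uniform convergence preserves $|m|=1$ and $m(0)=e_2$, the bound $\int_\R(m_2^2+m_3^2)<\infty$ forces $m_1^2\to1$ at $\pm\infty$, and the orientation fixed by $m(0)=e_2$ yields $m(\pm\infty)=\pm e_1$; weak lower semicontinuity of both terms then shows $m$ is a minimizer.

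Next I would \emph{reduce to the great circle}. Setting $\rho=\sqrt{m_2^2+m_3^2}=\sqrt{1-m_1^2}$ and $\hat m=(m_1,\rho,0)$, the pointwise inequality $(\partial_x m_2)^2+(\partial_x m_3)^2\ge(\partial_x\rho)^2$ gives $F(\hat m)\le F(m)$, with equality only if $(m_2,m_3)$ has a fixed direction; the constraint $m(0)=e_2$, which breaks the rotational invariance of $F$ about the $e_1$-axis, forces that direction to be $e_2$, hence $m_3\equiv0$ and $m_2\ge0$. Lifting $m=(\sin\f,\cos\f,0)$ with $\f\in[-\tfrac\pi2,\tfrac\pi2]$ and $\f(0)=0$ reduces $F$ to
\[
E(\f)=\int_\R a(x)\big((\f')^2+\cos^2\f\big)\,dx .
\]
Monotonicity of $\f$ follows from a cut-and-reflect argument using that $\pm\tfrac\pi2$ are the only minima of $\cos^2$: any interior oscillation can be removed without increasing $E$, so the minimizer is increasing. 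The first variation then yields \eqref{EL} on $\R\setminus\{0\}$, the excluded point carrying the Lagrange multiplier of the constraint $\f(0)=0$; from $(a\f')'=-a\sin\f\cos\f\in L^\infty$ and $a\ge a_0$ one gets $\f'\in L^\infty$, i.e.\ $\f$ Lipschitz.

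The main obstacle is \emph{uniqueness}, since $E$ is not convex and $a$ is only bounded measurable, so phase-plane/first-integral methods (which would give $\f'=\cos\f$ when $a$ is constant) are unavailable. I would use a lattice principle: for two increasing minimizers $\f_1,\f_2$, both $\max(\f_1,\f_2)$ and $\min(\f_1,\f_2)$ are admissible and satisfy $E(\max)+E(\min)=E(\f_1)+E(\f_2)$, because the Dirichlet energies split almost everywhere and the potential is invariant under the relabeling $\{\f_1,\f_2\}=\{\max,\min\}$; hence both are minimizers and we may assume $\f_1\ge\f_2$. Restricting to $(0,\infty)$ and $(-\infty,0)$ separately, where \eqref{EL} holds with no multiplier, the difference $w=\f_1-\f_2\ge0$ solves a linear divergence-form equation $\partial_x(a\partial_x w)+c(x)w=0$ with $c\in L^\infty$; the strong maximum principle/Harnack inequality valid for uniformly elliptic operators with bounded measurable coefficients forces $w\equiv0$ on each half-line once it vanishes at an interior point, and matching the continuous flux $a\f'$ at $0$ rules out the remaining possibility. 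The delicate points are the non-convex potential and the behaviour at the constrained point $x=0$.

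Finally, for the \emph{even, monotone} case: replacing $\f(x)$ by $-\f(-x)$ gives another minimizer with the same constraints and, by evenness of $a$, the same energy, so uniqueness forces $\f$ odd; then $a\f'$ is even and continuous at $0$, the multiplier vanishes, and \eqref{EL} holds on all of $\R$. For \emph{stability} I would use the orthonormal tangent frame $\tau=(\cos\f,-\sin\f,0)$, $e_3$ at $m$, write $v=\alpha\tau+\beta e_3$, and compute, using $\partial_x\tau=-\f' m$, the decoupling
\[
T(v)=\underbrace{\int_\R a(\alpha')^2-a\cos2\f\,\alpha^2\,dx}_{T_1(\alpha)}+\underbrace{\int_\R a(\beta')^2+a(\sin^2\f-(\f')^2)\beta^2\,dx}_{T_2(\beta)} .
\]
Both are controlled by a ground-state substitution with $u=\cos\f>0$: writing $L_1,L_2$ for the Jacobi operators of $T_1,T_2$, a direct computation gives $L_2(\cos\f)=0$ (the exact zero mode coming from rotations about $e_1$, so that $T_2\ge0$ for \emph{any} weight) and $L_1(\cos\f)=a\cos\f\,\big((\f')^2-\cos^2\f\big)$. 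Setting $G=(\f')^2-\cos^2\f$, the identity $G'=-2\tfrac{a'}{a}(\f')^2$ together with $\f'(\pm\infty)=0$ (whence $G(\pm\infty)=0$) and the sign of $a'$ (nonnegative on $\R_+$, nonpositive on $\R_-$) gives $G\ge0$, hence $L_1(\cos\f)\ge0$; writing $\alpha=(\cos\f)\,w$ then yields $T_1(\alpha)=\int_\R a\cos^2\f\,(w')^2\,dx+\int_\R (\cos\f)\,L_1(\cos\f)\,w^2\,dx\ge0$, first for $\alpha\in C_c^\infty$ and then by density, which is exactly where monotonicity of the weight enters. This establishes $T(v)\ge0$.
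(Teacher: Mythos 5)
The decisive gap is in your uniqueness argument. The lattice step itself is fine (both $\max(\f_1,\f_2)$ and $\min(\f_1,\f_2)$ are admissible and the energies add up, so you may assume $w=\f_1-\f_2\ge 0$), and in one dimension the Harnack/unique-continuation alternative does give: on each half-line either $w\equiv 0$ or $w>0$ with $a\partial_x w(0^\pm)\ne 0$. But your way of ruling out the second alternative -- ``matching the continuous flux $a\partial_x\f$ at $0$'' -- rests on a false premise: for a minimizer of \eqref{star} the flux $a\partial_x\f$ is \emph{not} continuous at $0$ in general. The equation \eqref{EL} holds only on $\R\setminus\{0\}$, and the jump $a\partial_x\f(0^+)-a\partial_x\f(0^-)$ is exactly the Lagrange multiplier of the pointwise constraint $\f(0)=0$; it has no reason to vanish, nor to coincide for $\f_1$ and $\f_2$. (The paper proves continuity of the flux at $0$ only under the additional hypotheses that $a$ is even and hence $\f$ odd, and its footnote on $v_1/m_1$ jumping at $0$ is precisely about this failure in general.) Since the zeroth-order coefficient of the linearized equation, built from the difference quotient of $\sin\f\cos\f$ on $[0,\tfrac\pi2]$, has no sign, you cannot fall back on an interior maximum-principle argument either. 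The paper's route is genuinely different and sidesteps the issue: via the Hardy decomposition with $L_1 m_1=L_2 m_2=0$ it shows the \emph{exact} identity $G(\tilde\f)-G(\f)=T(v)\ge 0$ for every admissible competitor $\tilde\f$, with equality only for $v=0$; alternatively it observes that $G$ is strictly convex in the variable $m_2=\cos\f$. Either of these would have to replace your maximum-principle step.

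Two further remarks. In the existence step, ``the orientation fixed by $m(0)=e_2$ yields $m(\pm\infty)=\pm e_1$'' does not follow: $m(0)=e_2$ only gives $m_1(0)=0$ and says nothing about the signs of $m_1$ at $\pm\infty$, so a transition of the minimizing sequence can escape to infinity; one must first rearrange the sequence so that $m_{1,n}\ge 0$ on $\R_+$ and $m_{1,n}\le 0$ on $\R_-$ (energy non-increasing), as the paper does with $|\f_n|$. Your stability argument, on the other hand, is correct and close to the paper's: the decoupling $T(v)=T_1(\alpha)+T_2(\beta)$ and the zero mode $L_2(\cos\f)=0$ are exactly the paper's, while your choice of $\cos\f$ (instead of the paper's $\xi=a\partial_x\f$) as the positive function in the Hardy substitution for the tangential part, with $L_0(\cos\f)=a\cos\f\,\big((\partial_x\f)^2-\cos^2\f\big)\ge 0$ deduced from $\tfrac{d}{dx}\big((\partial_x\f)^2-\cos^2\f\big)=-\xi^2\,d(a^{-2})$ and the monotonicity of $a$, is a valid and arguably tidier variant, since $\cos\f$ is manifestly positive and continuous at the origin.
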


Theorem \ref{thm1} is based on the following uniqueness result for solutions of the weighted pendulum equation \eqref{EL}.

\begin{thm}\label{lem1} There exists a unique solution $\f\in \dot{H}^1(\R)$ to \eqref{EL} in $\R\setminus \{0\}$ under the constraints \eqref{constr} that satisfies $-\frac\pi 2\leq \f(-x)\leq 0\leq \f(x) \leq \frac\pi 2$ for every $x>0$.
This solution $\f$ is Lipschitz and increasing in $\R$, $a(x)\partial_x \f\in W^{1, \infty}(\R\setminus \{0\})$ is positive and $\f$ is the unique minimizer of
\be
\label{star}
\min \bigg\{G(\f)=F\big((\sin \f, \cos \f,0)\big)\, :\, \f(0)=0  \quad \textrm{and}\quad \f(\pm \infty)=\pm \frac\pi2\bigg\}.\ee
Furthermore, if $a$ is even in $\R$ and non-decreasing in $\R_+$, then $\f$ is an odd stable solution to \eqref{EL} in the entire $\R$, i.e., for every $\eta\in H^1(\R)$,
$$Q(\eta)=\frac12\frac{d^2}{dt^2}\bigg|_{t=0}G(\f+t\eta)=\int_{\R} \bigg(a(x)(\partial_x \eta)^2-a(x)\cos 2\f(x) \, \eta^2\bigg)\, dx\geq 0.$$
\end{thm}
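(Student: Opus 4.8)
The plan is to regard $G(\f)=\int_\R a((\partial_x\f)^2+\cos^2\f)\,dx$ as a weighted pendulum energy and to obtain the solution of \eqref{EL} as its minimizer. So I would first produce a minimizer by the direct method, read off from it every qualitative property (the bounds, the equation, monotonicity, and the regularity $a\partial_x\f\in W^{1,\infty}$), then prove it is the \emph{only} admissible solution and the unique minimizer, and finally establish $Q(\eta)\ge0$ in the monotone case. It is convenient throughout to work with the flux $p:=a\,\partial_x\f$, for which \eqref{EL} reads $p'=-\tfrac a2\sin2\f$ on $\R\setminus\{0\}$ and $\partial_x\f=p/a$.

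\textbf{Existence and properties of a minimizer.} Since $a\ge a_0>0$, $G$ is coercive for the $\dot{H}^1$ seminorm, so along a minimizing sequence (with $\f(0)=0$ and $\f(\pm\infty)=\pm\pi/2$ fixed by subtracting a reference profile) the derivatives are bounded in $L^2$; weak $L^2$-convergence gives lower semicontinuity of the Dirichlet part, while the local uniform convergence from the $\dot{H}^1$-bound handles $\int a\cos^2\f$ by Fatou, and $\cos^2\f\in L^1$ fixes the endpoint values. Truncating a minimizer at $\pm\pi/2$ lowers both terms, so $-\pi/2\le\f\le\pi/2$. Testing with variations vanishing at $0$ yields \eqref{EL} on $\R\setminus\{0\}$; as the right-hand side $-\tfrac a2\sin2\f$ is bounded, $p\in W^{1,\infty}(\R\setminus\{0\})$, hence $\partial_x\f=p/a\in L^\infty$ and $\f$ is Lipschitz. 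The sign condition and monotonicity come together: replacing $\f$ by $|\f|$ on $(0,\infty)$ leaves $G$ unchanged, so $|\f|$ is also a minimizer and solves \eqref{EL}; a transversal zero of $\f$ would create an upward corner (a positive jump of $p$) in $|\f|$, contradicting $p\in W^{1,\infty}$, while a tangential zero forces $\f\equiv0$ by Carath\'eodory uniqueness of the system $\partial_x\f=p/a,\ p'=-\tfrac a2\sin2\f$ — impossible since $\f(\infty)=\pi/2$. Thus $0\le\f\le\pi/2$ on $(0,\infty)$, so $\sin2\f\ge0$ gives $p'\le0$; since $\partial_x\f\in L^2$ forces $p(\pm\infty)=0$, the monotone $p$ satisfies $p>0$, and $\f$ is strictly increasing.

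\textbf{Uniqueness — the crux.} Because \eqref{EL} is only imposed on $\R\setminus\{0\}$, the problem splits into two half-line boundary value problems joined at $\f(0)=0$, and the arguments above show that \emph{every} admissible solution is increasing with $p>0$; moreover Carath\'eodory uniqueness says two solutions agreeing in both $\f$ and $p$ at one point coincide. I would then use the lattice trick: $\max$ and $\min$ of two functions have, pointwise a.e., the same pair of values and of slopes, so $G(\max)+G(\min)=G(\f_1)+G(\f_2)$; hence if $\f_1,\f_2$ are minimizers so are $\max,\min$, a transversal crossing again yields a forbidden upward corner of the flux, and a tangential contact forces coincidence by Carath\'eodory. \emph{I expect this to be the main obstacle.} The nonlinearity $\sin2\f$ is not monotone on $(0,\pi/2)$ (it is destabilizing for $\f<\pi/4$), so neither a naive maximum principle for $\f_1-\f_2$ nor a monotone shooting argument closes, and the merely measurable, non-constant weight destroys the translation/first-integral rigidity available for constant $a$. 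Two points then remain genuinely delicate: excluding the case of two distinct \emph{strictly ordered} minimizers (which, for a general weight, I expect to require the most care — via a boundary/decay analysis of $w=\f_1-\f_2$ or a strict-convexity argument along $\f_2+t(\f_1-\f_2)$), and showing that \emph{every} admissible solution is in fact the minimizer (equivalently, embedding $\f$ in a field of extremals / a calibration for this non-autonomous problem).

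\textbf{Symmetry and stability.} When $a$ is even, $\f(x)\mapsto-\f(-x)$ preserves \eqref{EL} and all constraints, so uniqueness forces $\f$ odd; then $p=a\partial_x\f$ is even and continuous at $0$, the flux has no jump there, and \eqref{EL} holds on all of $\R$. For stability, $Q(\eta)=\int_\R a((\partial_x\eta)^2-\cos2\f\,\eta^2)$ is $-\int_\R\eta\,L\eta$ for the Jacobi operator $L\eta:=\partial_x(a\partial_x\eta)+a\cos2\f\,\eta$. Using $p'=-\tfrac a2\sin2\f$, a direct computation gives the key identity $L(p)=-\tfrac12\sin2\f\;d(a^2)$, which is a nonpositive measure precisely because $a$ is non-decreasing on $\R_+$ (so $d(a^2)\ge0$ and $\sin2\f\ge0$ there) and even. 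Hence $p>0$ is a positive supersolution of the Jacobi equation, and the ground-state substitution $\eta=p\,\zeta$ yields, for $\eta\in C_c^\infty(\R)$, the identity $Q(\eta)=\int_\R a\,p^2(\partial_x\zeta)^2-\int_\R\zeta^2\,p\,L(p)$, whose first term is nonnegative and whose second is the integral of $\zeta^2p>0$ against the nonpositive measure $L(p)$, with a favourable sign. Continuity of $Q$ and density of $C_c^\infty$ in $H^1$ then give $Q(\eta)\ge0$ for all $\eta\in H^1(\R)$.
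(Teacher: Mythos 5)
Your existence step, the qualitative properties of the minimizer (the bounds $-\tfrac\pi2\le\f\le\tfrac\pi2$, the monotonicity and decay of the flux $p=a\,\partial_x\f$, its strict positivity, the Lipschitz regularity), the odd-symmetry reduction when $a$ is even, and the stability computation via $L_0\xi=\sin\f\cos\f\,\partial_x(a^2)$ followed by the ground-state substitution $\eta=\xi\hat\eta$ all coincide with the paper's argument (Lemma \ref{lem-exis} and Steps 1, 2 and 6 of the proof of Theorem \ref{lem1}). The genuine gap is exactly where you place it: uniqueness, which is the central claim. The lattice ($\max/\min$) trick only rules out crossing minimizers; it says nothing about two distinct minimizers that are ordered on each half-line, and it does not address admissible solutions of \eqref{EL} that are not a priori minimizers, whereas the theorem asserts uniqueness in the whole class of $\dot H^1$ solutions with the stated sign condition. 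Your fallback of ``strict convexity along $\f_2+t(\f_1-\f_2)$'' fails as stated: $G$ is not convex in the variable $\f$ (the potential $a\cos^2\f$ is concave near $\f=0$, precisely the destabilizing regime you yourself identify), so convexity along segments in $\f$ is false and cannot exclude ordered minimizers.

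The paper closes both holes simultaneously in Step 4 of the proof of Theorem \ref{lem1}. Given any admissible solution $\f$ of \eqref{EL} and any finite-energy competitor $\tilde\f$, set $m=(\sin\f,\cos\f,0)$, $\tilde m=(\sin\tilde\f,\cos\tilde\f,0)$ and $v=\tilde m-m$; the pointwise identity $m\cdot v=-\tfrac12|v|^2$ combined with the Euler--Lagrange system \eqref{EL-m} makes the energy difference \emph{exactly} equal to the second variation, $G(\tilde\f)-G(\f)=(L_1v_1,v_1)+(L_2v_2,v_2)$, with no higher-order remainder. Each term is then nonnegative, with equality only for $v=0$, by the Hardy decomposition of Lemma \ref{thm:hardy} relative to the positive functions $m_1$ (on each half-line, where $L_1m_1=0$) and $m_2$ (on $\R$, where $L_2m_2=0$) --- the same device you use for $Q$, but applied componentwise to the constrained second variation. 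This proves at once that every admissible ODE solution is a global minimizer and that the minimizer is unique; it is the ``calibration'' you suspected was needed. (Step 5bis of the paper supplies the convexity route you were groping for: after the change of variables $m_2=\cos\f$ the functional becomes $E(m_2)=\int_\R a\big(\tfrac{(\partial_x m_2)^2}{1-m_2^2}+m_2^2\big)\,dx$, which \emph{is} strictly convex.) Without one of these two ingredients your outline does not establish the uniqueness assertions of the theorem.
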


\begin{rem}
In Theorems \ref{thm1} and \ref{lem1}, the even symmetry of the weight $a$ is imposed to have the odd symmetry of the solution $\f$ yielding the equation \eqref{EL} to hold in the entire $\R$. Moreover, the monotonicity of $a$ is imposed to have the stability of the solutions $\f$ and $m$.
\end{rem}
 
Without the assumption that $a$ is non-decreasing in $\R_+$, the solution $\f$ in Theorem \ref{lem1}  can be unstable, i.e., $Q(\eta)<0$ in some direction $\eta\in H^1(\R)$ (yielding also the instability of the constraint minimizer $m$ in Theorem \ref{thm1}). We give the following example for a non-increasing weight $a$ in $\R_+$:

\begin{pro}
\label{pro1}
Let $a:\R\to \R$ be the even function given by $a=2$ in $(-1,1)$ and $a=1$ in $\R\setminus [-1,1]$. Then the solution $\f$ in Theorem \ref{lem1}  is unstable, i.e., $Q(\eta)<0$ in some direction $\eta\in H^1(\R)$. Consequently, the constraint minimizer $m$ in Theorem 1 is unstable, i.e., $T(v)<0$ for some direction $v\in H^1(\R, \R^3)$ with $v\cdot m=0$ in $\R$. Moreover, there is no minimizer $\tilde m:\R\to \Ss^2$ of $F$ under the constraints $\tilde m(\pm \infty)=\pm e_1$.
\end{pro}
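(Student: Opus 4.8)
The three assertions reduce to one explicit computation, and the plan is to produce a single admissible direction along which the second variation $Q$ of Theorem~\ref{lem1} is negative. The direction I would use is $\eta:=a\,\partial_x\f$. The point is that although $a$ and $\partial_x\f$ each jump at $x=\pm1$, their product $w:=a\,\partial_x\f$ is continuous: by Theorem~\ref{lem1} it belongs to $W^{1,\infty}(\R\setminus\{0\})$, is positive, and---using that $\f$ is odd (oddness follows from evenness of $a$ and the uniqueness in Theorem~\ref{lem1}) and smooth across $0$---it extends continuously across $0$; being bounded and exponentially decaying at $\pm\infty$, $w\in H^1(\R)$. On each of the three intervals $(-\infty,-1)$, $(-1,1)$, $(1,\infty)$ where $a$ is constant, differentiating \eqref{EL} (there $\f''+\tfrac12\sin 2\f=0$) gives $w''+\cos 2\f\,w=0$, so one integration by parts yields $\int_I\big((w')^2-\cos 2\f\,w^2\big)=[\,w\,w'\,]_{\partial I}$ on each such $I$. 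Multiplying by the constant value of $a$ on $I$ and summing, the bulk integrals cancel and $Q(\eta)$ reduces to the interface contributions at $x=\pm1$, the $\pm\infty$ endpoints dropping by decay.

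It then remains to read off the sign of these interface terms. At $x=1$ the two adjacent pieces contribute $a_{\mathrm{in}}(ww')(1^-)-a_{\mathrm{out}}(ww')(1^+)$ with $a_{\mathrm{in}}=2$, $a_{\mathrm{out}}=1$; since $w$ is continuous there while $w'=-\tfrac{a}{2}\sin 2\f$ jumps, this equals $-\tfrac12\sin\!\big(2\f(1)\big)\,w(1)\,(a_{\mathrm{in}}^2-a_{\mathrm{out}}^2)$, and $x=-1$ gives the same value by oddness. Hence $Q(\eta)=-\sin\!\big(2\f(1)\big)\,(a\,\partial_x\f)(1)\,(a_{\mathrm{in}}^2-a_{\mathrm{out}}^2)=-3\,\sin\!\big(2\f(1)\big)\,(a\,\partial_x\f)(1)<0$, because $\f(1)\in(0,\tfrac\pi2)$ (strict monotonicity of $\f$) gives $\sin(2\f(1))>0$, $(a\,\partial_x\f)(1)>0$, and $a_{\mathrm{in}}^2-a_{\mathrm{out}}^2=3>0$. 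The decisive sign is $a_{\mathrm{in}}>a_{\mathrm{out}}$, i.e.\ $a$ decreasing across the interface, which is exactly the failure of the monotonicity hypothesis and the mechanism behind the Remark.

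For the constrained minimizer $m=(\sin\f,\cos\f,0)$ I would take the in-plane variation $v:=\eta\,(\cos\f,-\sin\f,0)$, which satisfies $v\cdot m=0$ and $v\in H^1(\R,\R^3)$. Since $m$ solves \eqref{EL} on all of $\R$ (oddness of $\f$ together with smoothness across $0$), the quadratic form for $T$ in Theorem~\ref{thm1} is valid, and substituting $|\partial_x v|^2=(\partial_x\eta)^2+\eta^2(\partial_x\f)^2$, $v_2^2+v_3^2=\eta^2\sin^2\f$, $|v|^2=\eta^2$ together with $\lambda=a(\partial_x\f)^2+a\cos^2\f$ collapses everything to $T(v)=Q(\eta)<0$. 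For the relaxed problem ($\tilde m(\pm\infty)=\pm e_1$ only) I would use $a\big(|\partial_x\tilde m|^2+1-\tilde m_1^2\big)\ge|\partial_x\tilde m|^2+1-\tilde m_1^2\ge 2\sqrt{1-\tilde m_1^2}\,|\partial_x\tilde m|$; integrating, the last quantity equals $2\int\sqrt{1-m_1^2}\,ds$ along the image path, which is at least $2\cdot 2=4$ since the distance in the metric $\sqrt{1-m_1^2}\,|dm|$ between the antipodes $\mp e_1$ on $\Ss^2$ equals $2$. Thus $F(\tilde m)\ge 4$; equality would force the energy density to vanish on $(-1,1)$ (where $a-1=1$) while $\tilde m$ is a non-constant great-circle soliton, which is impossible, so $F>4$ always. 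As a soliton translated to $+\infty$ drives $F$ to $4$, the infimum $4$ is not attained.

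The step I expect to be the main obstacle is the interface bookkeeping in the computation of $Q(\eta)$. Writing $\eta=\partial_x\f\cdot\xi$ would force $\xi$ to be discontinuous at $\pm1$ in order to keep $\eta$ continuous, and a careless integration by parts that overlooks this drops the interface terms and spuriously suggests stability; the computation succeeds only because I exploit both that $a\,\partial_x\f$ is continuous and that $\partial_x^2\f=-\tfrac12\sin 2\f$ is continuous across $\pm1$, so that the bulk genuinely cancels and only the correctly signed interface terms survive.
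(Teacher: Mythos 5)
Your argument is correct, and while the mechanism is the same as in the paper (the instability comes from testing $Q$ along a direction proportional to $\xi=a\,\partial_x\f$ and is localized at the downward jump of $a$ across $x=\pm1$), your implementation is genuinely different and somewhat more elementary. The paper goes through the Hardy decomposition \eqref{main-q}: it tests with the truncated family $\eta_\eps=\xi\hat\eta_\eps$, reads the singular contribution off $L_0\xi=\sin\f\cos\f\,\partial_x(a^2)=\sin\f\cos\f\,(3\delta_{-1}-3\delta_1)$, and lets $\eps\to0$; you test directly with $\eta=\xi\in H^1(\R)$ and compute $Q(\eta)$ by piecewise integration by parts on the three intervals where $a$ is constant, using $\xi''+\cos 2\f\,\xi=0$ there. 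The two computations agree: since $\xi(1)=\cos\f(1)$ (because $\partial_x\f=\cos\f$ on $(1,\infty)$, as in Step 1 of the paper's proof), your value $-3\sin\big(2\f(1)\big)\,\xi(1)$ equals the paper's limit $-6\sin\f(1)\cos^2\f(1)$. What your route buys is that it dispenses with the limiting family and with the explicit first-integral computation of the paper's Step 1; what it costs is the need to justify the piecewise integration by parts, which is indeed fine since $\f$ is smooth on each interval, $\xi,\xi'$ are continuous except for the jump of $\xi'$ at $\pm1$, and $\xi\xi'\to0$ at $\pm\infty$. The passage from $Q(\eta)<0$ to $T(v)<0$ via $v=\eta(\cos\f,-\sin\f,0)$ is identical to the paper's. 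For the non-existence of minimizers under the sole constraints $\tilde m(\pm\infty)=\pm e_1$, the paper first projects onto $\Ss^1$-valued maps and lifts, then uses the Bogomolny bound $G(\hat\f)\geq2\int\cos\hat\f\,\partial_x\hat\f\,dx=4$; your degenerate-metric length bound on $\Ss^2$ reaches the same lower bound $4$ directly, and your equality/non-attainment analysis (the optimal profile must be a great-circle soliton carrying strictly positive excess energy on $(-1,1)$ where $a=2$, while translating the soliton to infinity drives $F$ to $4$) coincides with the paper's Step 3.
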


\medskip

In the case of an even weight $a$ that is $C^1$ smooth in $\R$ and non-decreasing in $\R_+$, existence and stability results for domain walls are proved by Carbou and Sanchez in \cite{CS}. They address the uniqueness of domain walls as an open question. Theorems \ref{thm1} and \ref{lem1} give positive results for the question of uniqueness. The proof is based on a variational method for non-smooth and non-monotonous weight $a$ (instead of the shooting method used in  \cite{CS} where the regularity of $a$ is essential). The difficulty here consists in the heterogeneity of the non-smooth weight $a$ for which the equipartition of the two terms in the energy is lost in general for optimal domain walls (in contrast to the case of homogeneous weight yielding an autonomous ODE in \eqref{EL}).

\section{The weighted pendulum equation}

Note that the solutions $\f:\R\to \R$ to the weighted pendulum equation \eqref{EL} in $\R$ are critical points of the energy functional
$$G(\f)=F\big((\sin \f, \cos \f,0)\big)=\int_{\R} \bigg(a(x)(\partial_x \f)^2+a(x)\cos^2 \f\bigg)\, dx.$$
We start with the following existence result:

\begin{lem} \label{lem-exis} There exists a minimizer $\f:\R\to \R$ in \eqref{star}. Any such minimizer $\f$ satisfies 
$-\frac\pi 2\leq \f(-x)\leq 0\leq \f(x) \leq \frac\pi 2$ for every $x>0$ and \eqref{EL} holds in 
$\R\setminus\{0\}$. 
\end{lem}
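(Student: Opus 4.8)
The plan is to run the direct method of the calculus of variations on $G$ over the admissible class $\mathcal A=\{\f\in\dot H^1(\R):\ \f(0)=0,\ \f(\pm\infty)=\pm\tfrac{\pi}{2}\}$ (so $\partial_x\f\in L^2(\R)$). This class is nonempty and $\inf_{\mathcal A}G<\infty$: testing with the Gudermannian heteroclinic $\f_0(x)=\arctan(\sinh x)$, for which $\partial_x\f_0=\cos\f_0=1/\cosh x\in L^2(\R)$, gives $G(\f_0)\le A_0\int_\R\big((\partial_x\f_0)^2+\cos^2\f_0\big)\,dx<\infty$. For a minimizing sequence $(\f_n)$, coercivity $a\ge a_0$ yields $\|\partial_x\f_n\|_{L^2}^2\le G(\f_n)/a_0\le C$, and together with $\f_n(0)=0$ and $|\f_n(x)-\f_n(y)|\le\|\partial_x\f_n\|_{L^2}\,|x-y|^{1/2}$ this gives local uniform bounds and equicontinuity. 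By Arzel\`a--Ascoli and a diagonal extraction I would obtain $\f_n\to\f$ locally uniformly with $\partial_x\f_n\rightharpoonup\partial_x\f$ in $L^2$. Lower semicontinuity is then termwise: $v\mapsto\int_\R a\,v^2$ is convex, hence weakly lower semicontinuous, and $\int_\R a\cos^2\f\le\liminf\int_\R a\cos^2\f_n$ by Fatou using $\cos^2\f_n\to\cos^2\f$ a.e. Thus $G(\f)\le\liminf G(\f_n)=\inf_{\mathcal A}G$, and $\f(0)=0$ passes to the limit.

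The first real obstacle is that local uniform convergence is blind to the behaviour at $\pm\infty$, so a priori $\f(\pm\infty)=\pm\tfrac{\pi}{2}$ could fail (loss of compactness at infinity). I would remove this by normalizing the minimizing sequence beforehand: the truncation $T\f_n=\max(-\tfrac{\pi}{2},\min(\f_n,\tfrac{\pi}{2}))$ increases neither term (the Dirichlet term since $|\partial_x T\f_n|\le|\partial_x\f_n|$, the potential since $\cos^2$ vanishes at $\pm\tfrac{\pi}{2}$), and replacing $\f_n$ by $|\f_n|$ on $\R_+$ and $-|\f_n|$ on $\R_-$ leaves $G$ unchanged; so I may assume $0\le\f_n\le\tfrac{\pi}{2}$ on $\R_+$ and $-\tfrac{\pi}{2}\le\f_n\le0$ on $\R_-$, bounds that pass to $\f$. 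Now $G(\f)<\infty$ forces $\cos^2\f\in L^1(\R)$, and since $\tfrac{d}{dx}\cos^2\f=-\sin(2\f)\,\partial_x\f$ with $\|\sin 2\f\|_{L^2}^2\le4\int_\R\cos^2\f<\infty$, one gets $\tfrac{d}{dx}\cos^2\f\in L^1(\R)$; hence $\cos^2\f$ has limits at $\pm\infty$, necessarily $0$. As $\f$ is confined to $[0,\tfrac{\pi}{2}]$ on $\R_+$, where $\cos^2$ is strictly monotone, this forces $\f(+\infty)=\tfrac{\pi}{2}$, and symmetrically $\f(-\infty)=-\tfrac{\pi}{2}$. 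So $\f\in\mathcal A$ is a minimizer.

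For the Euler--Lagrange equation, I would test an arbitrary minimizer $\f$ with $\f+t\eta$, $\eta\in C_c^\infty(\R\setminus\{0\})$ --- vanishing near $0$ and compactly supported, so the constraints $\f(0)=0$ and $\f(\pm\infty)=\pm\tfrac{\pi}{2}$ are preserved --- and set $\tfrac{d}{dt}\big|_{t=0}G(\f+t\eta)=0$ to obtain $\int_\R a\,\partial_x\f\,\partial_x\eta=\int_\R a\sin\f\cos\f\,\eta$, the weak form of \eqref{EL} on $\R\setminus\{0\}$. Boundedness of $-a\sin\f\cos\f$ then gives $a\partial_x\f\in W^{1,\infty}(\R\setminus\{0\})$; in particular $a\partial_x\f$ is continuous on each half-line.

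Finally, the pointwise bounds for an arbitrary minimizer. The outer bound $|\f|\le\tfrac{\pi}{2}$ again comes from $T$: $G(T\f)\le G(\f)$ with equality, and on the open set $\{|\f|>\tfrac{\pi}{2}\}$ equality in the potential forces $\cos^2\f=0$, which with continuity of $\f$ and $\f(0)=0$ makes this set empty. The sign bound $\f\ge0$ on $\R_+$ (and $\f\le0$ on $\R_-$) is where I expect the real work, since the naive floor $\max(\f,0)$ raises the potential ($\cos^2 0=1$) and energy comparison is inconclusive. Instead I would use that the reflection equal to $|\f|$ on $\R_+$ and to $\f$ on $\R_-$ is an equal-energy competitor, hence also a minimizer solving \eqref{EL}, so that $a\partial_x|\f|$ is continuous on $\R_+$. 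If $\{\f<0\}\cap\R_+\ne\emptyset$, look at the right endpoint $\beta$ of one of its components: either $\f$ changes sign at $\beta$, in which case $a\partial_x|\f|=\sign(\f)\,a\partial_x\f$ is continuous only if $(a\partial_x\f)(\beta)=0$, or $\f$ vanishes on an interval to the right of $\beta$, in which case $a\partial_x\f=0$ there; either way $(\f,a\partial_x\f)(\beta)=(0,0)$. Writing \eqref{EL} as the Carath\'eodory system $\partial_x\f=(a\partial_x\f)/a$, $\ \partial_x(a\partial_x\f)=-a\sin\f\cos\f$, whose right-hand side is Lipschitz in the state $(\f,a\partial_x\f)$ uniformly in $x$ because $a_0\le a\le A_0$, uniqueness from the datum $(0,0)$ forces $\f\equiv0$ --- contradicting $\f(+\infty)=\tfrac{\pi}{2}$. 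Hence $\{\f<0\}\cap\R_+=\emptyset$, i.e.\ $\f\ge0$ on $\R_+$, and the case of $\R_-$ is symmetric. Reconciling the reflection competitor with the $W^{1,\infty}$-regularity of solutions in this step is the crux.
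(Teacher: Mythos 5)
Your proposal follows the paper's proof almost step for step: the same normalization of the minimizing sequence (truncation at $\pm\frac\pi2$ plus the reflection $|\f_n|$ on $\R_+$ and $-|\f_n|$ on $\R_-$), the same recovery of the limits at $\pm\infty$ from the integrability of $\cos^2\f$ and of its derivative, the same derivation of \eqref{EL} by testing with $C^\infty_c(\R\setminus\{0\})$, the same truncation argument for $|\f|\le\frac\pi2$, and the same scheme ``equal-energy competitor $+$ continuity of the flux $a\partial_x\f$ $+$ unique continuation via Gronwall'' for the sign condition.

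The only real divergence is the competitor used in that last step, and there your case analysis has a small hole. Taking $\beta$ to be the right endpoint of a component of $\{\f<0\}\cap\R_+$, you treat the cases ``$\f$ changes sign at $\beta$'' and ``$\f\equiv 0$ on an interval to the right of $\beta$'', but a third case is possible: $\f\le 0$ just to the right of $\beta$ with zeros and negative values both accumulating at $\beta$ (infinitely many further components of $\{\f<0\}$ clustering there). Two repairs. (i) Take instead $\beta=\sup\big(\{\f<0\}\cap\R_+\big)$, finite since $\f(+\infty)=\frac\pi2$; then $|\f|=\f$ on $[\beta,\infty)$, so the two fluxes agree on $(\beta,\infty)$ and their common limit at $\beta^+$ is $a\partial_x\f(\beta)$, while from the left there are Lebesgue points accumulating at $\beta$ where $a\partial_x|\f|=-a\partial_x\f$; continuity of both fluxes then forces $a\partial_x\f(\beta)=0$. (ii) The paper's competitor avoids the issue altogether: for a putative zero $x_0>0$ of $\f$ it flips the sign on the fixed interval $(0,x_0)$, so the competitor's flux equals $-a\partial_x\f$ on all of $(0,x_0)$ and $+a\partial_x\f$ on $(x_0,\infty)$, and continuity at $x_0$ immediately gives $a\partial_x\f(x_0)=0$; this also yields the stronger conclusion that $0$ is the only zero of $\f$, from which $\f>0$ on $\R_+$ follows by the intermediate value theorem. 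With either fix, your Carath\'eodory/Gronwall unique-continuation conclusion is exactly the paper's.
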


\begin{proof}{ of Lemma \ref{lem-exis}} We divide the proof in several steps:

\smallskip

\nd {\it Step 1}. {\it Existence of a minimizer}. Let $(\f_n:\R \to \R)_n$ be a minimizing sequence in \eqref{star}. By cutting at $\pm \frac\pi2$, the energy density cannot increase, so we can assume $\f_n\in [-\frac\pi2, \frac\pi2]$ in $\R$. Moreover, replacing $\f_n$ by $|\f_n|$  in $\R_+$ and $(-|\f_n|)$ in $\R_-$, the energy density does not change, so we can assume $\f_n\in [0, \frac\pi2]$ in $\R_+$ and $\f_n\in [-\frac\pi2,0]$ in $\R_-$. As $(G(\f_n))_n$ is bounded, by \eqref{cond} we get that $(\f_n)_n$ is bounded in $\dot H^1(\R)$. Thus, for a subsequence, $\f_n\rightharpoonup \f$ weakly in $\dot H^1(\R)$ and uniformly in every compact of $\R$. In particular, $\f(0)=0$ and $\f_n\to \f$ pointwise in $\R$, so $\f \in [0, \frac\pi2]$ in $\R_+$ and $\f \in [-\frac\pi2,0]$ in $\R_-$. By l.s.c. of the map $\psi\mapsto \int_{\R} a(x) \psi^2\, dx$ in weak $L^2(\R)$ and Fatou's lemma, we deduce that $G$ is l.s.c. in weak $\dot H^1(\R)$, so that $\liminf_{n\to \infty} G(\f_n) \geq G(\f)$.  In particular, $\cos \f\in H^1(\R_\pm)$ yielding $\cos \f(\pm \infty)=0$. As $\f(+\infty)\in [0, \frac\pi2]$ and $\f(-\infty) \in [-\frac\pi2,0]$, we conclude that $\f(\pm\infty)=\pm \frac\pi2$, i.e., $\f$ is a minimizer in \eqref{star}. 

\smallskip

\nd {\it Step 2}. {\it Properties of any minimizer in \eqref{star}}. Let $\f$ be an arbitrary minimizer in \eqref{star}. Then $\f\in \dot H^1(\R)$ is continuous in $\R$. By minimality, $\f$ verifies \eqref{EL} in ${\cal D}'(\R\setminus\{0\})$. Thus, by \eqref{cond},  $\partial_x(a(x)\partial_x \f)\in L^\infty(\R\setminus\{0\})$ yielding $a(x)\partial_x \f$ is Lipschitz in every set $(-R,R)\setminus\{0\}$; in particular, $\partial_x \f$ is a bounded function in $(-R,R)$, so $\f$ is Lipschitz in $(-R,R)$ for every $R>0$. 

\smallskip

\nd {\it Claim 1}: {\it We prove $-\frac\pi2\leq \f\leq \frac\pi 2$ in $\R$}. For that, assume by contradiction that there is a point in $\R$ where $\f$ is larger than $\frac\pi 2$. By continuity of $\f$ and \eqref{constr}, it means that there is a non-empty interval $J=(x_0, y_0)$ such that $\f>\frac\pi 2$ in $J$ and $\f(x_0)=\f(y_0)=\frac\pi 2$. If we cut-off at $\frac\pi2$ and set $\tilde \f:=\f$ in $\R\setminus J$ and $\tilde \f=\frac\pi 2$ in $J$, then $\tilde \f$ satisfies the constraints \eqref{constr} and $G(\f)>G(\tilde \f)$ (as the energy of $\f$ in $J$ is positive while the energy of $\tilde \f$ in $J$ vanishes)  which contradicts the minimality of $\f$. Thus, $\f\leq \frac\pi2$ in $\R$. A similar argument shows that $\f\geq -\frac\pi2$ in $\R$.

\smallskip

\nd {\it Claim 2}: {\it We prove that $0$ is the only vanishing point of $\f$ in $\R$}. For that, assume by contradiction that there is a point $x_0\neq 0$ such that $\f(x_0)=0$. W.l.o.g., we may assume that $x_0>0$. Set $J=(0, x_0)$ and $\tilde \f=\f$ in $\R\setminus J$ and $\tilde \f=-\f$ in $J$. Then $\tilde \f$ is also a minimizer in \eqref{star} because $G(\f)=G(\tilde \f)$. By the regularity above, it means that $a(x)\partial_x \f$ and $a(x)\partial_x \tilde \f$ are continuous around $x_0\neq 0$. As $a(x)\partial_x \f(x)=a(x)\partial_x \tilde \f(x)$ if $x>x_0$ and $a(x)\partial_x \f(x)=-a(x)\partial_x \tilde \f(x)$ in $J$, we conclude that $a(x_0)\partial_x \f(x_0)=0$. Then we apply the unique continuation principle for the solution $\f_*=0$ to \eqref{EL} in $\R_+$: 
writing $\xi=a(x) \partial_x \f$, the ODE \eqref{EL} for $\f$ in $\R_+$ is equivalent to the linear ODE system $\partial_x(\f, \xi)=(\frac{\xi}{a(x)}, -a(x) b(x) \f)$ in $\R_+$ with bounded coefficients, where $b$ is given by 
$$b(x)=\begin{cases}\frac{\sin \f(x) \cos \f(x)}{\f(x)} & \textrm{ if  } \f(x)\neq 0,\\ 
1 & \textrm{ if  } \f(x)=0. \end{cases}$$
As $(\f(x_0), \xi(x_0))=(0,0)$, we have $\ds |(\f(x), \xi(x))|\leq C\bigg|\int_{x_0}^x |(\f(t), \xi(t))|\, dt\bigg|$ for every $x>0$ and Gronwall's lemma implies $(\f, \xi)=(0,0)$ in $\R_+$, in particular,
$\f=0$ in $\R_+$ which contradicts $\f(+\infty)=+\frac\pi 2$. Therefore, $\f$ vanishes only at $0$ in $\R$ which proves the claim. 

\smallskip

By continuity of $\f$ and \eqref{constr}, we conclude that $\f\in [-\frac\pi2,0]$ in $\R_-$ and $\f \in [0, \frac\pi2]$ in $\R_+$. 
\qed
\end{proof}

\bigskip

We prove now the uniqueness result for the weighted pendulum equation:

\begin{proof}{ of Theorem \ref{lem1}}
\nd Let $\f:\R\to \R$ be a $\dot H^1(\R)$ solution of \eqref{EL} in  $\R\setminus \{0\}$, $\f(0)=0$, $\f(\pm \infty)=\pm \frac\pi2$ and $\f\in [-\frac\pi2,0]$ in $\R_-$ and $\f \in [0, \frac\pi2]$ in $\R_+$. Then $\f$ is continuous in $\R$,  $\partial_x(a(x)\partial_x \f)\in L^\infty(\R\setminus\{0\})$ yielding $a(x)\partial_x \f$ is Lipschitz in $(-R,R)\setminus\{0\}$ and $\f$ is Lipschitz in $(-R,R)$ for every $R>0$. 

\smallskip

\nd {\it Step 1}. {\it We show that $\partial_x \f$ is a non-negative bounded function in $\R$.} We prove it in $\R_+$ (a similar argument yields also the conclusion in $\R_-$). As $(\partial_x \f)^2\in L^1(\R_+)$, we choose $x_n\to +\infty$ Lebesgue points of $\partial_x \f$ such that $\partial_x \f(x_n)\to 0$ as $n\to \infty$. As $0\leq \f\leq \frac\pi2$ in $\R_+$, we have by \eqref{EL} that the continuous function $a(x)\partial_x \f$ in $\R_+$ satisfies
\be
\label{234}
\partial_x(a(x)\partial_x \f)\leq 0\quad \textrm{ in}\quad  \R_+. 
\ee
Then for every $x>y>0$, we have for every large $n$ (so that $x_n>x$) that $a(y)\partial_x \f(y)\geq a(x)\partial_x \f(x)\geq a(x_n)\partial_x \f(x_n)\to 0$ as $n\to 0$. As $\partial_x \f$ is bounded around $0$, \eqref{cond} yields $+\infty>\frac{A_0}{a_0}\limsup_{y\searrow 0}\partial_x \f(y)\geq \partial_x \f(x)\geq 0$ for a.e. $x\in \R_+$, i.e., $\partial_x \f\in L^\infty(\R_+)$ and non-negative. 

\smallskip

\nd {\it Step 2}. {\it We show $\partial_x \f >0$ a.e. in $\R$.} We prove it first in $\R_+$. 
Assume by contradiction that there exists a Lebesgue point $x_0>0$ of $\partial_x \f$ such that  $\partial_x \f(x_0)=0$. By \eqref{234} and Step 1, as $a(\cdot)\partial_x \f$ is continuous in $\R_+$, we get $0=a(x_0)\partial_x \f(x_0)\geq a(x)\partial_x \f(x)\geq 0$ for every $x>x_0$, so $\partial_x \f=0$ a.e. in $(x_0, +\infty)$, that is, $\f=\frac\pi2$ in $(x_0, +\infty)$ by \eqref{constr}. Then the unique continuation principle\footnote{Writing $\psi=\frac\pi2-\f$ and $\xi=a(x) \partial_x \f$, the ODE \eqref{EL} in $\f$ in $\R_+$ is equivalent to the linear ODE system $\partial_x(\psi, \xi)=(-\frac{\xi}{a(x)}, -a(x) \tilde b(x) \psi)$ in $\R_+$ with bounded coefficients, where $\tilde b$ is given by 
$$\tilde b(x)=\begin{cases}\frac{\sin \f(x) \cos \f(x)}{\frac\pi 2-\f(x)} & \textrm{ if  } \f(x)\neq \frac\pi 2,\\ 
1 & \textrm{ if  } \f(x)=\frac\pi 2. \end{cases}$$
As $(\psi, \xi)=(0,0)$ in $(x_0, +\infty)$, Gronwall's lemma implies $(\psi, \xi)=(0,0)$ in $\R_+$, i.e., $\f=\frac\pi2$ in $\R_+$.
} 
for the solution $\f_*=\frac\pi2$ in \eqref{EL} implies  $\f=\f_*$ in $\R_+$ which contradicts $\f(0)=0$. Therefore, $\partial_x \f >0$ a.e. in $\R_+$. Also, by Step 1 (through \eqref{234}), we have $\liminf_{x\searrow 0}\partial_x \f>0$. A similar argument yields also the conclusion in $\R_-$ which finishes Step 2. 

\smallskip

\nd  By Step 2, $\f$ is Lipschitz and increasing in $\R$, in particular, $0$ is the only vanishing point of $\f$ and $-\frac\pi2<\f<\frac\pi 2$ in $\R$. As $a$ satisfies \eqref{cond}, by \eqref{EL} and $\f\in \dot{H}^1$, we deduce that $a(x)\partial_x \f\in W^{1, \infty}\cap L^2(\R\setminus \{0\})$ with $a(x)\partial_x \f >0$ in $\R\setminus \{0\}$.

\smallskip

\nd {\it Step 3}. {\it We prove that $G(\f)<\infty$}. As $\f\in \dot{H}^1$ is continuous in $\R$ and $a$ satisfies \eqref{cond}, it is enough to prove $\cos^2\f\in L^1(\R\setminus [-1,1])$. For that, multiplying \eqref{EL} by $\f$, integration by parts implies for every $y>1$:
\begin{align*}
&C\geq -\big[a(x)\partial_x \f(x) \f(x)\big]_{1}^{y}+\int_1^{y} a(x) (\partial_x \f)^2\,dx=-\int_1^{y}\partial_x (a(x)\partial_x \f) \f\, dx\\
&=\int_1^{y} a(x)\sin \f \cos \f \f \, dx\geq a_0 \f(1)\sin \f(1) \int_{1}^y \cos \f\, dx\geq \tilde C \int_{1}^y \cos^2 \f\, dx\end{align*}
where $C, \tilde C>0$ do not depend on $y$ because $\f$ and $a(x)\partial_x \f$ are Lipschitz in $(1, \infty)$. Passing to the limit $y\to \infty$, we get $\cos \f \in L^2((1, \infty))$. The same argument yields $\cos \f \in L^2((-\infty,1))$.

\smallskip

\nd {\it Step 4}. {\it We show that $\f$ is a minimizer in \eqref{star}}. For that, identifying $\R^2\sim \R^2\times \{0\}$, we set $m=(\sin \f, \cos \f, 0):\R\to  \Ss^1\times \{0\}\sim \Ss^1$. Then $m\in W^{1, \infty}\cap \dot H^1(\R, \Ss^1\times \{0\})$ and by \eqref{EL}, $m$ satisfies
\be
\label{EL-m}
-\partial_x(a(x)\partial_x m)+a(x)m_2 e_2=\lambda(x) m\quad \textrm{in}\quad \R\setminus \{0\}
\ee
with the Lagrange multiplier $$\lambda(x)=a(x)|\partial_x m|^2+a(x)m_2^2=a(x)(\partial_x \f)^2+a(x)\cos^2\f \in L^\infty(\R).$$ As $\f$ vanishes only at $0$ and $-\frac\pi2<\f<\frac\pi 2$ in $\R$, we have $|m_1|=|\sin \f|>0$ in $\R\setminus \{0\}$ and $m_2=\cos \f > 0$ in $\R$. Let now $\tilde \f$ be an arbitrary $\dot H^1(\R)$ function satisfying the constraints $\tilde \f(0)=0$ and $\tilde \f(+\infty)=\pm \frac\pi2$. We want to show that $G(\tilde \f)\geq G(\f)$. If $G(\tilde \f)=+\infty$, we are done. Otherwise, we may assume that $G(\tilde \f)<\infty$. We set 
$\tilde m=(\sin \tilde \f, \cos \tilde \f, 0):\R\to  \Ss^1\times \{0\}$ and write
$$\tilde m=m+v, \quad m\cdot v=-\frac{|v|^2}2, \quad v=(v_1,v_2,0)\in H^1(\R, \R^2\times \{0\}), \, v(0)=(0,0,0).$$
This is because $v_2=\tilde m_2-m_2\in L^2(\R)$; to show that $v_1=\tilde m_1-m_1\in L^2(\R_+)$, one uses that $\f(x), \tilde \f(x)\in [\pi/4, 3\pi/4]$ for all large $x$ and then $|\sin \alpha-\sin \beta|\leq C|\cos \alpha-\cos \beta|$ for every $\alpha, \beta \in [\pi/4, 3\pi/4]$, similarly $v_1\in L^2(\R_-)$. By \eqref{EL-m}, we compute using the second variation $T$ of $F$ at $m$:
\begin{align*}
G(\tilde \f)-G(\f)&=F(\tilde m)-F(m)\\
&=\int_{\R} a(x)|\partial_x v|^2+a(x)v_2^2+2a(x)\big(\partial_x m\cdot \partial_x v+ m_2 v_2)\, dx\\
&=\int_{\R} a(x)|\partial_x v|^2+a(x)v_2^2+2\lambda(x) m\cdot v\, dx\\
&=T(v)=(L_1 v_1,v_1)+(L_2 v_2,v_2)
\end{align*}
where $(\cdot, \cdot)$ is the duality $(H^{-1}, H^1)$ in $\R$ and $L_1$ and $L_2$ are the linear operators
\be
\label{L}
L_1=-\partial_x(a(x)\partial_x)-\lambda(x), \quad L_2=-\partial_x(a(x)\partial_x)+a(x)-\lambda(x).\ee
The minimality of $\f$ comes by the following:

\smallskip

\nd {\it Claim}: {\it We prove that $(L_1 v_1,v_1)\geq 0$ and $(L_2 v_2,v_2)\geq 0$ with equality if and only if $v_1=v_2=0$}. For that, we use the following method (called Hardy's decomposition, see e.g., \cite{INSZ, INSZ-ENS, IN}): if $\tilde v_1\in C^\infty_c(\R\setminus \{0\})$ and $\tilde v_2\in C^\infty_c(\R)$, then one decomposes $\tilde v_j=m_j \hat v_j$ (yielding $\hat v_j$ is Lipschitz with compact support in $\R$ since $m_j$ does not vanish on the support of $\tilde v_j$) and uses that $L_j m_j=0$ in $\R\setminus \{0\}$ for $j=1,2$, so that (e.g., see \cite[Lemma A.1]{INSZ} or Lemma \ref{thm:hardy} in Appendix):
$$(L_1 \tilde v_1, \tilde v_1)=\int_{\R} a(x) m_1^2 \bigg(\partial_x (\frac{\tilde v_1}{m_1})\bigg)^2\, dx, \quad (L_2 \tilde v_2, \tilde v_2)=\int_{\R} a(x) m_2^2 \bigg(\partial_x (\frac{\tilde v_2}{m_2})\bigg)^2\, dx.$$
To conclude, note that $\psi\mapsto (L_j \psi, \psi)$ is continuous in strong $H^1$ topology (as $a, \lambda\in L^\infty$) and $v_j$ can be approximated in strong $H^1(\R)$ by the above $\tilde v_j$, $j=1,2$. Therefore, Fatou's lemma yields\footnote{Note that $\frac{v_1}{m_1}$ may jump at $0$, therefore $(L_1 v_1,  v_1)$ is estimated by the integral taken only in $\R\setminus \{0\}$.}
\be
\label{positiv}
(L_1 v_1,  v_1)\geq \int_{\R\setminus \{0\}} a(x) m_1^2 \bigg(\partial_x (\frac{v_1}{m_1})\bigg)^2\, dx, \quad (L_2 v_2, v_2)\geq \int_{\R} a(x) m_2^2 \bigg(\partial_x (\frac{v_2}{m_2})\bigg)^2\, dx.
\ee
If $(L_1 v_1,  v_1)=(L_2 v_2,  v_2)=0$, we deduce that $v_1=c_1^\pm m_1$ in $\R_\pm$ and $v_2=c_2 m_2$ in $\R$ for some constants $c_1^\pm$ and $c_2$. As $v_1\to 0$ and $m_1\to \pm 1$ as $x\to \pm \infty$, we have $c_1^\pm=0$ yielding $v_1=0$ in $\R$. As $m_2(0)=1$ and $v_2(0)=0$, we also have $c_2=0$ yielding $v_2=0$ in $\R$; thus, $\tilde m=m$ in $\R$. 

\smallskip

\nd {\it Step 5}. {\it Uniqueness of $\f$}. Assume that $\tilde \f\in \dot H^1$ is another solution to \eqref{EL} in $\R\setminus \{0\}$ with the stated assumptions. Then by Step 4, $\tilde \f$ is a minimizer in \eqref{star}, so for $\tilde m=(\sin \tilde \f, \cos \tilde \f, 0)$, we get $G(\tilde \f)=F(\tilde m)=F(m)=G(\f)$ yielding $\tilde m=m$ in $\R$ and then $\f=\tilde \f$ in $\R$ by the uniqueness\footnote{\label{foot} Recall that $(\sin, \cos):[-\frac\pi2, \frac\pi2]\to \{(z_1,z_2)\in \Ss^1\, :\, z_2\geq 0\}$ is a diffeomorphism.} of the lifting $\f, \tilde \f\in [-\frac\pi2, \frac\pi2]$ in $\R$. In particular, $\f$ is the unique minimizer in \eqref{star}.

\smallskip

\nd {\it Step 5bis}. {\it Another proof for the uniqueness of $\f$}. This second method is based on a convexity argument inspired by \cite[Proposition 1]{IM}. For that, denoting $m_2=\cos \f$, we have
$$G(\f)=E(m_2)=\int_{\R}\bigg( a(x) \frac{(\partial_x m_2)^2}{1-m_2^2}+ a(x) m_2^2\bigg)\, dx.$$
Note that the function $(v,w)\in \R\times (-1,1)\mapsto \frac{v^2}{1-w^2}$ is convex and $w\in \R\mapsto w^2$ is strictly convex. Therefore, restricting ourselves to functions $m_2:\R\setminus \{0\}\to (-1,1)$, the functional $m_2\mapsto E(m_2)$ is strictly convex yielding the uniqueness of a critical point $m_2:\R\setminus \{0\}\to (-1,1)$ of $E$ satisfying $m_2(0)=1$ and $m_2(\pm \infty)=0$. This yields the uniqueness of critical points $\f$ of $G$ satisfying \eqref{constr}, $\f\in (-\pi,0)$ in $\R_-$ and $\f\in (0, \pi)$ in $\R_+$.

\smallskip

\nd {\it Step 6}. {\it If $a$ is even in $\R$ and non-decreasing in $\R_+$, then $Q(\eta)\geq 0$ for every $\eta\in H^1(\R)$.} This is true whenever $\eta(0)=0$ by the minimality of $\f$ in \eqref{star}: $G(\f+t\eta)\geq G(\f)$ for every $t\in \R$  (as $(\f+t\eta)(0)=0$ and $(\f+t\eta)(\pm \infty)=\pm \frac\pi 2$) yielding $Q(\eta)\geq 0$. Let us treat the general case (i.e., $\eta(0)\neq 0$). Note first that the symmetry of $a$ implies that the unique solution $\f$ is odd (as $\tilde \f(x)=-\f(-x)$ if $x<0$ and $\tilde \f=\f$ in $\R_+$ is also a solution of \eqref{EL} in $\R\setminus \{0\}$ satisfying our assumptions). Thus, the limits $\ds \lim_{x\searrow 0} a(x) \partial_x \f(x)=\lim_{x\nearrow 0} a(x)  \partial_x \f(x)$ exist and are equal 
(because $a$ and $\partial_x \f$ are even and $a(x)\partial_x \f$ is non-increasing in $\R_+$ by \eqref{234}), so \eqref{EL} holds in ${\cal D}'(\R)$ and 
\be
\label{2star}
\xi=a(x)\partial_x \f\in W^{1, \infty}\cap H^1(\R) \quad \textrm{and} \quad \xi>0 \, \textrm{ in } \, \R. 
\ee
As $a(x)\partial_x \f$ is non-increasing and $a$ is non-decreasing in $\R_+$, we deduce that $\partial_x \f$ is  non-decreasing in $\R_+$(so $\f$ is concave in $\R_+$), in particular, there exists the limit $\lim_{x\searrow 0} \partial_x \f(x)>0$. Next we compute
$$Q(\eta)=(L_0\eta, \eta), \quad L_0=-\partial_x(a(x)\partial_x)-a(x)\cos 2\f(x),$$
where $(\cdot, \cdot)$ is the duality $(H^{-1}, H^1)$. By \eqref{EL}, we have
$$L_0\xi=\sin \f \cos \f \partial_x(a^2) \quad \textrm{in } {\cal D}'(\R).$$ As $\f\in (0, \frac\pi2)$ and $a^2$ is non-decreasing in $\R_+$, by the symmetry of $\f$ and $a$, we deduce that $L_0\xi \geq 0$ in $H^{-1}(\R)$. We use Hardy's decomposition: by \eqref{2star}, if $\eta \in C^\infty_c(\R)$, then one decomposes $\eta=\xi \hat \eta$ and computes (e.g., see \cite[Lemma A.1]{INSZ}  or Lemma \ref{thm:hardy} in Appendix):
\be
\label{main-q}
Q(\eta)=(L_0\eta, \eta)=(L_0 \xi, \xi \hat \eta^2)+\int_{\R} a(x)\xi^2 (\partial_x\hat \eta)^2\, dx\geq \int_{\R} a(x)\xi^2 (\partial_x\hat \eta)^2\, dx\geq  0.\ee
As $a$ is bounded, $Q$ is continuous over $H^1(\R)$; therefore, by density of $C^\infty_c(\R)$ in $H^1(\R)$, 
Fatou's lemma yields $\ds Q(\eta)\geq  \int_{\R} a(x)\xi^2 \bigg(\partial_x (\frac{\eta}\xi)\bigg)^2\, dx\geq  0$ for every $\eta\in H^1(\R)$. Note that $Q(\eta)=0$ implies that $\eta=c\xi$ for some $c\in \R$. In general, the kernel of the quadratic form $Q$ (i.e., $\ker L_0$) may not be $\{0\}$: for example, if $a=1$ in $\R$, then $\ker L_0=\R \partial_x \f$ (which is due to the translation invariance of $G$ in $x$ for the homogeneous weight $a$).
\qed
\end{proof}

\section{Uniqueness of domain walls}

We use Theorem \ref{lem1} to prove the uniqueness of domain walls in Theorem \ref{thm1}.

\begin{proof}{ of Theorem \ref{thm1}} Let $\f$ be the unique minimizer in \eqref{star} given in Theorem \ref{lem1} and set $m=(\sin \f, \cos \f,0):\R\to \Ss^1\times \{0\}\sim \Ss^1$. Recall that $|m_1|=|\sin \f|>0$ in $\R\setminus \{0\}$ and  $m_2=\cos \f >0$ in $\R$.

\smallskip

\nd {\it Step 1. We prove that $m$ is the unique minimizer of $F$ under the constraints }
\be
\label{2con}
m(0)=e_2\quad \textrm{ and }\quad m(\pm \infty)=\pm e_1.
\ee
For that, let $\hat m:\R\to \Ss^2$ be an arbitrary map satisfying the constraints \eqref{2con}. We want to prove $F(\hat m)\geq F(m)$. W.l.o.g., we may assume that $F(\hat m)<\infty$. As $|\partial_x(\hat m_2, \hat m_3)|\geq |\partial_x \sqrt{\hat m_2^2+\hat m_3^2}|$ a.e. in $\R$, we deduce that 
$$
F(\hat m)\geq F(\tilde m), \quad \textrm{with} \quad \tilde m=(\hat m_1, \sqrt{\hat m_2^2+\hat m_3^2},0),
$$
where $\tilde m\in \dot H^1(\R, \Ss^1)$ is continuous satisfying the constraints \eqref{2con}. 
By the argument in Step 4 in the proof of Theorem \ref{lem1}, we know that $F(\tilde m)\geq F(m)$ with equality if and only if $\tilde m=m$. This proves the minimality of $m$ for $F$ under the constraints \eqref{2con}. If $\hat m:\R\to \Ss^2$ is another such minimizer, then (within the above notations) $F(\hat m)=F(\tilde m)=F(m)$. We deduce then that $\tilde m=m$ (in particular, 
$\hat m_1=m_1$ and $\sqrt{\hat m_2^2+\hat m_3^2}=m_2>0$ in $\R$) and $|\partial_x(\hat m_2, \hat m_3)|=|\partial_x \sqrt{\hat m_2^2+\hat m_3^2}|$
a.e. in  $\R$. This yields $\partial_x \frac{(\hat m_2, \hat m_3)}{\sqrt{\hat m_2^2+\hat m_3^2}}=(0,0)$ a.e. in $\R$. Together with the constraint
$\hat m(0)=e_2$, we conclude that $\hat m_3=0$ and $\hat m_2=m_2$ in $\R$, i.e., $\hat m=m$ in $\R$.

\bigskip

\smallskip

\nd {\it Step 2. If $a$ is even in $\R$ and non-decreasing in $\R_+=(0, +\infty)$, then $m$ is a stable critical point of $F$.} 
By Theorem \ref{lem1}, we know that $\f$ is odd in $\R$, \eqref{EL} holds in the entire $\R$ implying that \eqref{EL-m} holds in the entire $\R$ (so $m$ is a critical point of $F$ in $\R$) and $Q(\eta)\geq 0$ for every $\eta \in H^1(\R)$.
Let 
$v=(v_1, v_2, v_3) \in H^1(\R, \R^3)$ with $v\cdot m=0$ in $\R$ and denote $v'=(v_1, v_2,0)$. As $m(0)=e_2$, we have $v_2(0)=0$. The second variation $T$ of $F$ at $m$ is given by
$$
T(v)=\int_{\R}\big( a(x) |\partial_x v|^2+a(x)(v_2^2+v_3^2)-\lambda(x)|v|^2\big)\, dx=T(v')+(L_2 v_3,v_3),
$$
where $(\cdot, \cdot)$ is the duality $(H^{-1}, H^1)$ in $\R$ and $L_2$ is given in \eqref{L}. By Step 4 in the proof of Theorem~\ref{lem1}, we know that 
$(L_2 v_3,v_3)\geq 0$ for every $v_3\in H^1(\R)$. It remains to show that $T(v')\geq 0$ for every $v'\in H^1(\R, \R^2\times \{0\})$ with $v_1m_1+v_2m_2=0$ in $\R$ (in particular, $v_2(0)=0$). 

\smallskip

\nd {\it Case 1: $v_1$ is Lipschitz with compact support in $\R$ and $v_2$ is Lipschitz with compact support in  $\R\setminus \{0\}$.} In this case, the tangential constraint $v_1\sin \f+v_2\cos \f=0$ yields a Lipschitz function $\eta$ with compact support in $\R\setminus \{0\}$ (in particular, $\eta\in H^1(\R)$) such that
$\ds \eta=\frac{v_1}{\cos \f}=-\frac{v_2}{\sin \f}$ in $\R$. Then one checks $\eta^2=|v'|^2$, $|\partial_x v'|^2=(\partial_x \eta)^2+\eta^2 (\partial_x \f)^2$ and
$T(v')=Q(\eta)\geq 0$ as $\f$ is a stable critical point of $G$.

\smallskip

\nd {\it Case 2. The general case}. We can approximate $v'$ in strong $H^1(\R, \R^2\times \{0\})$ by vector fields $v'_n=(v_{1,n}, v_{2,n}, 0)$ such that $v_{1,n}\in C^\infty_c(\R)$ and $v_{2,n} \in C^\infty_c(\R\setminus \{0\})$. As $v'_n$ is not necessarily orthogonal to $m$ in every point, we consider the projection $\tilde v'_n=v'_n-
(m\cdot v'_n) m$ that also converges to $v'$ in $H^1(\R)$ (as $m, \partial_x \f\in L^\infty(\R)$) and satisfies the tangential constraint $\tilde v'_n\cdot m=0$ in $\R$. As $m$ is Lipschitz, Case 1 applies to $\tilde v_n'$ and  
$T(\tilde v'_n)\geq 0$. By the continuity of $T$ in $H^1(\R)$ (as $a$ and $\lambda$ are bounded in $\R$), we conclude that $T(v')\geq 0$.\qed 
\end{proof}

\section{Example of an unstable solution}

We choose the even weight $a=2$ in $(-1,1)$ and $a=1$ in $\R\setminus [-1,1]$ that clearly is non-increasing in $\R_+$. The aim is to prove that the solution $\f$ in Theorem \ref{lem1} (which is odd and satisfies \eqref{EL} in the entire $\R$) is unstable. An important feature for this weight is the nonexistence of minimizers in \eqref{star} if the constraint $\f(0)=0$ is dropped; this yields the nonexistence of domain walls connecting $\pm e_1$ if the center of the domain wall is not fixed.

\begin{proof}{ of Proposition \ref{pro1}} We divide the proof in several steps.

\smallskip

\nd {\it Step 1. Computation of the solution $\f$ in Theorem \ref{lem1}}. For that, as $\f$ is odd, it is enough to determine $\f$ in $\R_+$. The main observation is that
$(\partial_x \f)^2-\cos^2 \f$ is locally constant in $\R_+\setminus \{1\}$ (which follows by multiplying \eqref{EL} by $\partial_x \f$). These two constants are given by:
$$\begin{cases}
(\partial_x \f)^2-\cos^2 \f=d^2-1 & \textrm{in } (0,1),\\
(\partial_x \f)^2-\cos^2 \f=0 & \textrm{in } (1, \infty),
\end{cases}
\quad \textrm{with } \, d=\lim_{x\searrow 0}\partial_x \f(x)>0. 
$$
This is because of \eqref{constr} and the existence of Lebesque points $x_n\to \infty$ of $\partial_x \f$ such that $\partial_x \f(x_n)\to 0$.
As $\f\in \dot{H}^1(\R)$ is increasing with values in $[0, \frac\pi2]$ in $\R_+$, we deduce 
$$\partial_x \f=\sqrt{\cos^2 \f+d^2-1} \textrm{  in } (0,1), \quad \partial_x \f=\cos \f \textrm{  in } (1, \infty).$$
The aim is to determine $d$ (which is unique as $\f$ is unique by Theorem \ref{lem1}). For that, the continuity of $\xi=a(x)\partial_x \f$ in \eqref{2star} yields $\lim_{x\searrow 1}\partial_x \f(x)=2 \lim_{x\nearrow 1}\partial_x \f(x)$, that is, $\cos \f(1)=2 \sqrt{\cos^2 \f(1)+d^2-1}$. Thus, $d\in (\frac12,1)$ is given by $d=\sqrt{1-\frac34 \cos^2\f(1)}$. In other words, $d$ is the unique solution in $(\frac12,1)$ of the equation
$$1=\int_0^{\arccos \sqrt{4(1-d^2)/3}}\frac{dt}{\sqrt{\cos^2 t+d^2-1}}.$$

\smallskip

\nd {\it Step 2. We prove the instability of $\f$}. This is based on \eqref{main-q}. Indeed, let $\psi: \R_+\to \R_+$ be a smooth function such that $\psi=1$ in $(0,1)$ and $\psi=0$ for $x>2$. For every $\eps>0$, set $\hat \eta_\eps\in C^\infty_c(\R)$, $\hat \eta_\eps(x)=1$ if $|x|<1$ and $\hat \eta_\eps(x)=\psi\big(\eps(|x|-1)\big)$ for $|x|>1$. As $\partial_x(a^2)=-3\delta_1$ in ${\cal D}'(\R_+)$ and $\xi$ is Lipschitz in $\R$, by the symmetry of our functions, we get in \eqref{main-q} for the Lipschitz function $\eta_\eps=\xi \hat \eta_\eps$ with compact support in $\R$:
\begin{align*}
\frac12Q(\eta_\eps)&=-3\sin \f(1)\cos^2\f(1)+\int_{1}^\infty\xi^2 (\partial_x \hat \eta_\eps)^2\, dx\\
&\leq -3\sin \f(1)\cos^2\f(1)+\eps^2 \|\partial_x \psi\|^2_{L^\infty} \int_{1}^\infty (\partial_x \f)^2\, dx\stackrel{\eps\to 0}{\to} -3\sin \f(1)\cos^2\f(1)<0.
\end{align*}
Therefore, for $\eps$ small enough, $\eta_\eps\in H^1(\R)$ satisfies  $Q(\eta_\eps)<0$. This entails the instability of $m=(\sin \f, \cos \f, 0)$: indeed, setting $v=(\eta_\eps \cos \f, -\eta_\eps \sin \f, 0)\in H^1(\R, \R^3)$, then $v\cdot m=0$ in $\R$ and $T(v)=Q(\eta_\eps)<0$.

\smallskip

\nd {\it Step 3. Non-existence of minimizers in \eqref{star} in the absence of the constraint $\f(0)=0$}. The aim is to show that
$$\inf \bigg\{G(\hat \f)\, :\, \hat \f(\pm \infty)=\pm \frac\pi2\bigg\}=4$$
and this infimum is not achieved. For that, as $a\geq 1$ in $\R$, the Cauchy-Schwartz inequality implies
\be
\label{rad}
G(\hat \f)\geq \int_{\R} (\partial_x \hat \f)^2+ \cos^2 \hat \f \, dx\geq 2\int_{\R} \cos \hat \f \, \partial_x \hat \f\, dx=4\ee
because $\sin \hat \f(\pm\infty)=\pm1$. The last inequality in \eqref{rad} becomes equality if and only if $\partial_x \hat \f=\cos \hat \f$ in $\R$ yielding the existence of a center $x_0\in \R$ such that $\hat \f(x)=\frac\pi 2-2\arctan (e^{-x+x_0})$ for every $x\in \R$ and $\hat \f(x_0)=0$ (see e.g. \cite{CL, CI}). For this family of domain walls with center at $x_0\in \R$, one computes:
$$G(\hat \f)=4+\int_{-1}^1 (\partial_x \hat \f)^2+ \cos^2 \hat \f \, dx=4+2\int_{-1}^1  \cos \hat \f \, \partial_x \hat \f\, dx=4+2[\sin \hat \f]_{-1}^1\to 4$$ as $x_0\to \infty$. This proves that the above infimum is indeed equal to $4$. If this infimum is achieved for some $\f$, then both inequalities in \eqref{rad} become equalities, so $\f$ is one of the above domain walls $\hat \f$ with center at $x_0$ and $G(\hat \f)=4$ contradicting the fact that $2[\sin \hat \f]_{-1}^1>0$.

This implies the non-existence of a minimizer $m=(m_1,m_2,m_3):\R\to \Ss^2$ of $F$ under the constraints $m(\pm\infty)=\pm e_1$. Indeed, for every $m=(m_1,m_2,m_3):\R\to \Ss^2$ with $m(\pm\infty)=\pm e_1$, $F(m)\geq F(\tilde m)$ with $\tilde m=(m_1, \sqrt{m_2^2+m_3^2},0):\R\to \Ss^2$ and $\tilde m(\pm \infty)=\pm e_1$. By Footnote \ref{foot}, there exists a unique lifting $\tilde \f:\R\to [-\frac\pi2, \frac\pi2]$ such that $\tilde m=(\sin\tilde \f, \cos \tilde \f,0)$ and $\tilde \f(\pm \infty)=\pm \frac\pi2$. Thus, $F(\tilde m)=G(\tilde \f)>4$ and this infimum $4$ is never achieved by the above argument. \qed
\end{proof}

\section{Some open questions}

In Theorem \ref{lem1}, we proved existence and uniqueness of the minimizer $\f$ in \eqref{star}, in particular, under the constraint of a fixed center at the origin. A natural question is whether $\f$ is a minimizer of $G$ 
under the only two constraints $\f(\pm \infty)=\pm \frac\pi 2$. The answer is negative for some weights $a$ as shown in Proposition \ref{pro1} where $\f$ is unstable and moreover, no minimizers of $G$ exist under the  constraints $\f(\pm \infty)=\pm \frac\pi 2$. However, the answer is positive for the homogeneous weight $a$ where $\f$ is the unique minimizer and also, the unique critical point (up to a translation of the center) of $G$ under the constraints $\f(\pm \infty)=\pm \frac\pi 2$  (see e.g. \cite{CI}).

\smallskip

\nd {\bf Open question 1}:  {\it Under which additional condition on the weight $a$ satisfying \eqref{cond}, is it true that the solution $\f$ in Theorem \ref{lem1} is a minimizer of $G$ under the constraints $\f(\pm \infty)=\pm \frac\pi 2$? In that case, under which further conditions on $a$, $\f$ is the unique minimizer (or more, the unique critical point) of $G$ under the only two constraints $\f(\pm \infty)=\pm \frac\pi 2$? }

\smallskip

This addresses in particular the question of existence of a minimizer $\f$ of $G$ under the two constraints $\f(\pm \infty)=\pm \frac\pi 2$. Such problem is solved in general by using the concentration-compactness lemma \`a la Lions. For the homogeneous weight $a$, we recall the following compactness result that handles the type of constraints $\f(\pm \infty)=\pm \frac\pi 2$, i.e., transitions between two different states:

\begin{lem}[Doering-Ignat-Otto \cite{DIO}]
\label{dio}
Let $\f_n:\R\to \R$ be such that $\liminf_{x \nearrow +\infty} \f_n(x)>0$ and  $\limsup_{x\searrow -\infty} \f_n(x)<0$ for every $n\in \NN$. If $\ds \limsup_{n\to \infty}\|\partial_x \f_n\|_{L^2(\R)}<\infty$, then for a subsequence, there exists a zero $z_n$ of $\f_n$ and a limit $\f\in \dot H^1(\R)$ such that $\f(0)=0$ and 
$$\f_n(\cdot+z_n)\to \f \quad \textrm{ locally uniformly in $\R$ and weakly in $\dot H^1(\R)$}$$
and $\liminf_{x\nearrow +\infty} \f(x)\geq 0$ and  $\limsup_{x\searrow -\infty} \f(x)\leq 0$. 
\end{lem}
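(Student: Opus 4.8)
The plan is to treat this as a concentration--compactness statement: the uniform bound $M:=\limsup_n\|\partial_x\f_n\|_{L^2(\R)}<\infty$ supplies all the needed compactness of suitable translates, and the only genuine difficulty is to choose the translation (the zero $z_n$) so that the two one-sided sign conditions survive in the limit. I would organize the argument around the fact that the non-strict inequalities $\liminf_{+\infty}\f\ge0$ and $\limsup_{-\infty}\f\le0$ are exactly what one can hope to preserve.

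First I would record the compactness available for \emph{any} admissible choice of zeros. Since $\dot H^1(\R)\hookrightarrow C^{0,1/2}(\R)$ in one dimension, once a zero $z_n$ of $\f_n$ is fixed and we set $\tilde\f_n:=\f_n(\cdot+z_n)$, the pinning $\tilde\f_n(0)=0$ together with $\|\partial_x\tilde\f_n\|_{L^2}\le M$ gives $|\tilde\f_n(x)|\le M|x|^{1/2}$ and $|\tilde\f_n(x)-\tilde\f_n(y)|\le M|x-y|^{1/2}$ uniformly in $n$. By Arzel\`a--Ascoli and a diagonal argument a subsequence converges locally uniformly to some continuous $\f$ with $\f(0)=0$, while weak compactness of $(\partial_x\tilde\f_n)$ in $L^2(\R)$ identifies $\partial_x\f$ as the weak limit, so $\f\in\dot H^1(\R)$ with $\|\partial_x\f\|_{L^2}\le M$ by lower semicontinuity. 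This part is routine and independent of which zero is selected.

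The heart of the matter is the selection of $z_n$, because the two sign conditions pull in opposite directions. If one takes $z_n$ to be the first zero, so that $\f_n\le0$ on $(-\infty,z_n)$, then $\f\le0$ on $(-\infty,0]$ and $\limsup_{-\infty}\f\le0$ is immediate, but $\liminf_{+\infty}\f\ge0$ may fail, since a long negative plateau to the right can survive into the limit; symmetrically, the last zero yields $\liminf_{+\infty}\f\ge0$ but may leave a persistent positive region on the left. A single extremal zero therefore cannot work in general, and the correct center must be chosen adaptively at the ``last genuine up-transition''. Here the bound $M$ is used in an essential way to separate localized excursions from persistent sign changes: I would run a profile decomposition in which either the transition concentrates near one zero, which is then translated to the origin to produce a genuine transition in the limit, or it spreads and splits, in which case any fixed-amplitude wrong-sign region costs a definite amount of $\int(\partial_x\f_n)^2\,dx$, so only finitely many can persist. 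The remaining features either recede to $\pm\infty$, where the locally uniform limit does not see them, or have amplitude tending to $0$; in both cases the non-strict inequalities $\ge0$ and $\le0$ are preserved.

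Finally I would assemble the pieces: with the selected $z_n$ and the subsequence extracted above, one reads off $\f(0)=0$, the local uniform and weak $\dot H^1$ convergence, and the two one-sided limits. I expect the main obstacle to be exactly this selection step, namely verifying that with the chosen center no persistent positive (respectively negative) region survives to the left (respectively right) of the origin in the limit. This is the only place where the $\dot H^1$ bound enters non-trivially, and it is also where the non-strict form of the conclusion, rather than a full strict transition, is what makes the statement correct.
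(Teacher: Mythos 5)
The paper does not actually prove Lemma \ref{dio}; it is recalled from \cite{DIO} with a citation and no argument, so there is no internal proof to compare against. Judged on its own terms, the first half of your proposal (once a zero $z_n$ is fixed, the pinning $\tilde\f_n(0)=0$ together with the uniform $\dot H^1$ bound gives local uniform convergence along a subsequence to some $\f\in\dot H^1(\R)$ with $\f(0)=0$, plus weak convergence of the derivatives) is correct and is indeed the routine part. You also correctly observe that neither the first nor the last zero works in general, so the whole content of the lemma is the selection of $z_n$.

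The gap is that the mechanism you propose for that selection rests on a false premise. You assert that ``any fixed-amplitude wrong-sign region costs a definite amount of $\int(\partial_x\f_n)^2\,dx$, so only finitely many can persist.'' For the Dirichlet integral alone there is no such quantization: by Cauchy--Schwarz a transition of height $c$ over an interval of length $L$ costs at least $c^2/L$, and this bound is attained by the affine profile, so the cost of an order-one excursion can be made arbitrarily small by spreading it out (there is no potential term here to prevent that, unlike in Modica--Mortola-type arguments). Concretely, let $\f(x)=\tanh x$ for $x\ge 0$ and, for $x<0$, let $\f$ be a sum of disjoint bumps of height $1$ supported in $[-10^k-4^k,\,-10^k+4^k]$, $k\ge 1$; the total Dirichlet energy is $\sum_k O(4^{-k})<\infty$ while $\limsup_{x\to-\infty}\f(x)=1$. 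Truncating $\f$ to the value $-1$ to the left of $-10^n-4^n-1$ yields admissible $\f_n$ with uniformly bounded energy whose last zero is $0$ and whose translates converge locally uniformly to this $\f$: infinitely many order-one wrong-sign excursions persist in the limit, and the conclusion fails for that choice of center (here the first zero happens to repair things, and fails instead for the mirror-image example with negative bumps at $+10^k$). So the dichotomy ``concentrate near one zero, or spread and pay quantized energy'' does not close the argument, and the selection step --- which you yourself identify as the main obstacle --- is left unproved. A working proof has to choose $z_n$ scale by scale, e.g.\ by controlling, for a sequence of thresholds $c_k\downarrow 0$, the positions $\sup\{x:\f_n(x)\le-c_k\}$ and $\inf\{x:\f_n(x)\ge c_k\}$ relative to the candidate zero and performing a diagonal extraction; this, rather than an energy count of excursions, is where the $\dot H^1$ bound genuinely enters.
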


\nd We address the following question concerning the compactness of uniformly bounded energy configurations in the case of heterogeneous weights $a$:

\smallskip

\nd {\bf Open question 2}:  {\it Under which additional condition on the weight $a$ satisfying \eqref{cond}, is the following true: if $\f_n:\R\to \R$ satisfies $\liminf_{x \nearrow +\infty} \f_n(x)>0$, $\limsup_{x\searrow -\infty} \f_n(x)<0$ for every $n\in \NN$ and $\ds \limsup_n\|\partial_x \f_n\|_{L^2(\R)}<\infty$, then for a subsequence, there exists a zero $z_n$ of $\f_n$ and a limit $\f\in \dot H^1(\R)$ such that $\f(z)=0$ for some $z\in \R$, $\liminf_{x\nearrow +\infty} \f(x)\geq 0$ and  $\limsup_{x\searrow -\infty} \f(x)\leq 0$,  
$\f_n(\cdot+z_n)\to \f(\cdot+z)$  locally uniformly in $\R$ and }
$$\liminf_{n\to \infty} \int_{\R}a(x)(\partial_x \f_n)^2(x)\, dx\geq  \int_{\R}a(x)(\partial_x \f)^2(x)\, dx\, ?$$

In Theorem \ref{lem1}, in order to have uniqueness of the solution $\f$, we imposed the condition $\f \in [-\frac \pi2, 0]$ in $\R_-$ and $\f\in [0, \frac\pi2]$ in $\R_+$. This condition is satisfied by any minimizer $\f$ in \eqref{star} by Lemma~\ref{lem-exis}. We address the following question:

\smallskip

\nd {\bf Open question 3}: {\it If $a$ satisfies \eqref{cond}, is it true that  any solution $\f\in \dot{H}^1(\R)$ to \eqref{EL} in $\R\setminus \{0\}$ under the constraints \eqref{constr} satisfies $-\frac\pi 2\leq \f(-x)\leq 0\leq \f(x) \leq \frac\pi 2$ for every $x>0$?}

\bigskip

\medskip

\noindent{\bf Acknowledgment.} The author thanks Gilles Carbou for pointing out the question of uniqueness of domain walls in notched ferromagnetic nanowires. The author is partially supported by the ANR project MOSICOF ANR-21-CE40-0004.

\section*{Appendix}

Inspired by \cite[Lemma A.1]{INSZ}, we prove the following identity for non-smooth weights $a$:

\begin{lem}[Hardy's decomposition]\label{thm:hardy}
Let $a:\R\to \R$ satisfy \eqref{cond}, $V\in L^1_{loc}(\R)$ and 
$$L=-\partial_x (a(x)\partial_x)+V(x).$$
If $\psi \in W^{1, \infty}_{loc}(\R)$ satisfies $\psi>0$ in $\R$, then for every $\eta \in C^\infty_c(\R)$, writing $\hat \eta:=\frac \eta \psi$, we have the following Hardy decomposition: 
$$(L\eta, \eta)=(L\psi, \psi \hat \eta^2)+\int_{\R} a(x) \psi^2  (\partial_x \hat \eta)^2 \, dx,$$
where $(\cdot, \cdot)$ is the duality $(H^{-1}, H^1)$ in $\R$.
\end{lem}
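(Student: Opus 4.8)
The plan is to show that, once both dualities are read as the natural energy bilinear forms, the claimed identity is nothing more than the Leibniz expansion of $\partial_x \eta = \hat\eta\,\partial_x\psi + \psi\,\partial_x\hat\eta$, so that essentially no analysis beyond checking integrability is required. First I would record the regularity of the building blocks. Since $\psi\in W^{1,\infty}_{loc}(\R)$ is continuous and strictly positive, on the compact support $K$ of $\eta$ it is bounded below by some $c_K>0$; hence $1/\psi\in W^{1,\infty}_{loc}(\R)$ with $\partial_x(1/\psi)=-\partial_x\psi/\psi^2$, and therefore $\hat\eta=\eta/\psi\in W^{1,\infty}(\R)$ with $\supp\hat\eta\subset K$. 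Likewise $\psi\hat\eta^2=\eta^2/\psi$ is Lipschitz with compact support, and $\partial_x(\psi\hat\eta^2)=\hat\eta^2\,\partial_x\psi+2\psi\hat\eta\,\partial_x\hat\eta$ holds a.e.

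Next I would fix the meaning of the two pairings as bilinear forms,
$$(L\eta,\eta)=\int_\R a(\partial_x\eta)^2\,dx+\int_\R V\eta^2\,dx,\qquad (L\psi,\psi\hat\eta^2)=\int_\R a\,\partial_x\psi\,\partial_x(\psi\hat\eta^2)\,dx+\int_\R V\psi\,(\psi\hat\eta^2)\,dx,$$
and check that all four integrals converge: the gradient terms are products of $L^\infty$ functions ($a$ bounded, the relevant derivatives bounded) supported in $K$, while the potential terms both equal $\int_\R V\eta^2\,dx$, which is finite since $V\in L^1_{loc}$ and $\eta^2$ is bounded with compact support. I would note here that the notation $(H^{-1},H^1)$ is slightly informal because $\psi$ is merely locally Lipschitz and $V$ only $L^1_{loc}$; the honest reading is the pairing of the first-order distribution $L\psi$ against the compactly supported Lipschitz function $\psi\hat\eta^2$, and this is exactly the right-hand integral above.

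Then the computation is a direct substitution. Writing $\partial_x\eta=\hat\eta\,\partial_x\psi+\psi\,\partial_x\hat\eta$ and squaring gives
$$\int_\R a(\partial_x\eta)^2=\int_\R a(\partial_x\psi)^2\hat\eta^2+2\int_\R a\psi\hat\eta\,\partial_x\psi\,\partial_x\hat\eta+\int_\R a\psi^2(\partial_x\hat\eta)^2,$$
while expanding $\partial_x(\psi\hat\eta^2)$ in the definition of $(L\psi,\psi\hat\eta^2)$ produces exactly the first two of these integrals together with $\int_\R V\psi^2\hat\eta^2=\int_\R V\eta^2$. Subtracting, the $(\partial_x\psi)^2\hat\eta^2$ term, the cross term, and the potential term cancel, leaving $(L\eta,\eta)-(L\psi,\psi\hat\eta^2)=\int_\R a\psi^2(\partial_x\hat\eta)^2\,dx$, which is the asserted identity.

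The only genuine point of care, and the part I expect to be the main obstacle, is the justification that the two pairings may legitimately be replaced by these integral expressions when $\psi$ is only Lipschitz and $V$ only $L^1_{loc}$, i.e. that no boundary or singular contribution is lost. This is handled precisely by \emph{defining} $(Lu,\varphi)$ through the symmetric form $\int_\R a\,\partial_x u\,\partial_x\varphi\,dx+\int_\R Vu\varphi\,dx$ for $u\in W^{1,\infty}_{loc}(\R)$ and $\varphi\in W^{1,\infty}(\R)$ compactly supported, so that no integration by parts is ever performed and no smoothness of $a$ or $V$ is needed, together with the Leibniz rule for products of locally Lipschitz functions, which holds a.e. Everything else is the elementary algebra above.
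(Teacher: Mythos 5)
Your proof is correct and follows essentially the same route as the paper: both expand the Dirichlet form of $\eta=\psi\hat\eta$ via the Leibniz rule and identify the $(\partial_x\psi)^2\hat\eta^2$, cross, and potential terms with $(L\psi,\psi\hat\eta^2)$ read as the symmetric energy form. The only cosmetic difference is that the paper writes the cross term as $\tfrac12 a\,\partial_x(\psi^2)\,\partial_x(\hat\eta^2)$ and performs one explicit integration by parts, whereas you absorb that step into the definition of the pairing; the underlying algebra and the regularity checks are identical.
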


\begin{proof}{} Note first that $\hat \eta$ is Lipschitz with compact support in $\R$. 
Integrating by parts, we have:
\begin{align*}
(L \eta, \eta)&= (L(\psi \hat \eta), (\psi \hat \eta))=\int_\R \bigg(a(x) \big(\partial_x(\psi \hat \eta)\big)^2 +V(x) \psi^2 \hat \eta^2\bigg)\, dx\\
&=\int_\R \bigg(a(x) \hat \eta^2  (\partial_x \psi)^2 +  a(x)\psi^2  (\partial_x \hat \eta)^2 +\frac 1 2 a(x) \partial_x(\psi^2) \partial_x (\hat \eta^2) +V(x) \psi^2 \hat \eta^2\bigg)\, dx\\
&{=} \int_\R a(x) \psi^2 (\partial_x \hat \eta)^2 \, dx+\int_\R \bigg( a(x) \hat \eta^2 (\partial_x \psi)^2+V(x) \psi^2 \hat \eta^2\bigg)\, dx-\frac 1 2 \bigg(\partial_x \big(a(x) \partial_x (\psi^2)\big), \hat \eta^2\bigg) \\ 
&=\int_\R a(x) \psi^2 (\partial_x \hat \eta)^2 \, dx+(L\psi, \psi \hat \eta^2),
\end{align*}
which is the desired identity. \qed 
\end{proof}

\end{document}